\newtheorem{theorem}{Theorem}[section]
\newtheorem{lemma}[theorem]{Lemma}
\newtheorem{proposition}[theorem]{Proposition}
\newtheorem{conjecture}[theorem]{Conjecture}
\newtheorem{question}[theorem]{Question}
\theoremstyle{corollary}
\theoremstyle{definition} % italic or bold etc.
\newtheorem{definition-lemma}[theorem]{Definition-Lemma}
\newtheorem{example}[theorem]{Example}
\theoremstyle{remark}
\newtheorem{remark}[theorem]{Remark}
\numberwithin{equation}{section}
\newcommand{\C}{\mathbb{C}}
\newcommand{\Z}{\mathbb{Z}}
\newcommand{\Q}{\mathbb{Q}}
\def\P{\mathbb{P}}
\def\Pic{\operatorname{Pic}}
\def\mult{\operatorname{mult}}
\def\Bs{\operatorname{Bs}}
\def\Sing{\operatorname{Sing}}
\title[Pluri-fundamental divisors on Gorenstein Fano varieties of coindex $4$]
{Singularities of pluri-fundamental divisors on Gorenstein Fano varieties of coindex $4$}
\begin{document}

\author{Jinhyung Park}
\address{Department of Mathematical Sciences, KAIST, 291 Daehak-ro, Yuseong-gu, Daejeon 34141, Republic of Korea}
\email{parkjh13@kaist.ac.kr}

\thanks{J. Park was partially supported by the National Research Foundation (NRF) funded by the Korea government (MSIT) (NRF-2021R1C1C1005479).}

\date{\today}
\keywords{Fano variety, fundamental divisor, singularity of a pair}

\begin{abstract}
Let $X$ be a Gorenstein canonical Fano variety of coindex $4$ and dimension $n$ with $H$ fundamental divisor. Assume $h^0(X, H) \geq n -2$. 
We prove that a general element of the linear system $|mH|$ has at worst canonical singularities for any integer $m \geq 1$. When $X$ has terminal singularities and $n \geq 5$, we show that a general element of $|mH|$ has at worst terminal singularities for any integer $m \geq 1$.
When $n=4$, we give an example of Gorenstein terminal Fano fourfold $X$ such that a general element of $|H|$ does not have terminal singularities.
\end{abstract}

\maketitle
%\tableofcontents

\section{Introduction}

Throughout the paper, we work over the field $\C$ of complex numbers.
Let $X$ be a Gorenstein Fano variety of dimension $n$ with canonical singularities. The \emph{index} of $X$ is
$$
i_X:=\max\{ t \in \Z \mid -K_X \sim tH \text{~where $H$ is an ample Cartier divisor} \}.
$$
It is well known that
$$
1 \leq i_X \leq n+1.
$$
The \emph{coindex} of $X$ is $n+ 1 - i_X$. An ample Cartier divisor $H$ on $X$ with $-K_X \sim i_X H$ is called the \emph{fundamental divisor} of $X$. Since $\Pic(X)$ is torsion-free, $H$ is uniquely determined up to linear equivalence. It is a natural problem to study singularities of general members in pluri-fundamental linear systems $|mH|$ for all integers $m \geq 1$.

By Kobayashi--Ochiai, $X$ is a projective space if $i_X=n+1$, and $X$ is a hyperquadric if $i_X=n$. 
A Gorenstein canonical Fano variety $X$ with $i_X=n -1$ is a \emph{del Pezzo variety}, and del Pezzo varieties were classified by Fujita \cite{Ft1, Ft3}. If $X$ is a del Pezzo variety, then the base locus $\Bs |H|$ is empty or consists of a single point neither in $\Sing X$ nor in $\Sing Y$, where $Y \in |H|$ is a general member. Thus $Y$ has canonical/terminal singularities if $X$ has canonical/terminal singularities. 
A Gorenstein canonical Fano variety $X$ with $i_X=n - 2$ is a \emph{Mukai variety}, and smooth Mukai varieties were classified by Mukai \cite{Mu} under the assumption that $|H|$ contains a smooth divisor. Mella \cite{M} verified this assumption, and moreover, he also proved that if $X$ is a Gorenstein Mukai variety with canonical/terminal singularities, then a general member in $|H|$ has canonical/terminal singularities except when $X$ is a complete intersection in $\P(1,1,1,1,2,3)$ of a quadric defined in the first four linear variables and a sextic. Finally, note that if $i_X \geq n -2$, then $|mH|$ is base point free for every integer $m \geq 2$ (see \cite[Remark 4.5]{L1}). We can conclude that singularities of general members in $|mH|$ with $m \geq 1$ are well understood when $i_X \geq n - 2$.

In this paper, we consider the case $i_X=n - 3$, i.e., $X$ has coindex $4$. 
Floris \cite{Fl} proved that a general member of the linear system $|H|$ has canonical singularities if $X$ is a Gorenstein canonical Fano variety of coindex $4$ and $h^0(X, H) \neq 0$.
However, in contrasts to the smaller coindex cases, there is a smooth Fano fourfold $X$ of coindex $4$ such that every member in $|H|$ is singular (see \cite[Example 2.12]{HV}). Heuberger \cite{H} proved that if $X$ is a smooth Fano fourfold, then a general member in $|-K_X|$ has only terminal singularities. This is a natural generalization of a classical result of Shokurov \cite{S} for smooth Fano threefolds. Heuberger's theorem together with aforementioned results implies that a general member in $|H|$ has only terminal singularities if $X$ is smooth.

The main result of this paper is the following.

\begin{theorem}\label{thm:main1}
Let $X$ be a Gorenstein canonical Fano variety of coindex $4$ and dimension $n \geq 4$, and $H$ be the fundamental divisor of $X$. 
\begin{enumerate}
 \item [$(1)$] Assume that $h^0(X, H) \geq n-2$. Then a general member of the linear system $|mH|$ has only canonical singularities for every integer $m \geq 1$.
 \item [$(2)$] Assume that $X$ has terminal singularities and $h^0(X, H) \geq n - 2$. Then a general member of the linear system $|mH|$ has only terminal singularities for every integer $m \geq 1$ unless $(n,m)=(4,1),(4,2),(4,3)$.
 \item [$(3)$] Assume that $X$ is smooth. Then $h^0(X, H) \geq n-2$, and a general member of the linear system $|mH|$ has only terminal singularities for every integer $m \geq 1$ unless $(n,m)=(4,2)$.
\end{enumerate}
\end{theorem}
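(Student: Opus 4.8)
The plan is to prove all three parts simultaneously by induction on $n$, with the Gorenstein Fano fourfolds ($n=4$, where $-K_X\sim H$ and $|mH|=|{-mK_X}|$) as the base case carrying all of the exceptional behaviour. For $m=1$ nothing new is needed: part $(1)$ is Floris's theorem \cite{Fl}, and in part $(3)$ the terminality of a general member of $|H|$ on a smooth $X$ follows from Heuberger \cite{H} together with the results of Mukai \cite{Mu} and Mella \cite{M} recalled above. The real content is therefore the pluri-fundamental range $m\geq 2$ together with the exact list of exceptions for terminality.

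The inductive engine is cutting by a general $Y\in|H|$. By adjunction $K_Y=(K_X+H)|_Y\sim-(n-4)H_Y$, where $H_Y:=H|_Y$, so for $n\geq 5$ the variety $Y$ polarized by $H_Y$ is again a Gorenstein Fano variety of coindex $4$ and dimension $n-1$; it has canonical singularities by Floris \cite{Fl}, and because $X$ has rational singularities the restriction $H^0(X,H)\to H^0(Y,H_Y)$ is surjective, giving $h^0(Y,H_Y)=h^0(X,H)-1\geq (n-1)-2$, so the hypothesis descends to $Y$. Kawamata--Viehweg vanishing yields $h^1(X,(m-1)H)=0$ for all $m\geq 1$, since $(m-1)H-K_X\sim(m+n-4)H$ is ample; hence $H^0(X,mH)\to H^0(Y,mH_Y)$ is surjective and, for general $D\in|mH|$, the intersection $D\cap Y$ is a general member of $|mH_Y|$, which by the inductive hypothesis has canonical (resp.\ terminal) singularities. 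As $D\cap Y$ is a general member of the linear system $|H_D|$ on $D$, where $H_D:=H|_D$, a Bertini-type theorem for singularities of pairs (inversion of adjunction) lifts these good singularities of the section to $D$ itself, away from $\Bs|mH|$. This reduces a pair $(n,m)$ to $(n-1,m)$ and drives the argument down towards $n=4$.

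What remains is to control the base locus and to settle the base case, which is where the difficulty concentrates. Along $\Bs|mH|$ the cutting argument does not apply, but the hypothesis $h^0(X,H)\geq n-2$ forces this locus to be small, as in the analyses of Floris and Mella, and a local study there combined with Kawamata--Viehweg/Nadel vanishing rules out a non-canonical centre; this closes part $(1)$ with no exceptions. For terminality the induction runs cleanly except when it would reduce to an already-exceptional fourfold case, so the burden falls on the base case $n=4$, i.e.\ the pluri-anticanonical systems $|{-mK_X}|$ on a Gorenstein Fano fourfold. For $m\geq 4$ I expect $|{-mK_X}|$ to be base-point-free by an effective-freeness argument in dimension four, whence Bertini together with the terminal behaviour of $X$ along $\Sing X$ produces a terminal general member. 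For $m=1,2,3$ the systems genuinely acquire base points.

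The main obstacle is precisely this: determining the base loci of $|{-K_X}|,|{-2K_X}|,|{-3K_X}|$ on a (possibly singular) Fano fourfold and the singularity type of a general member along them. This analysis should show that a general member is always canonical, but that terminality fails exactly at $(n,m)=(4,1),(4,2),(4,3)$ in the terminal case; the example announced in the abstract makes $(4,1)$ unavoidable. In the smooth case Heuberger \cite{H} removes $(4,1)$ and a direct computation removes $(4,3)$, leaving only $(4,2)$. Verifying that no bad behaviour propagates to $n\geq 5$---in particular handling the few cases where the induction would otherwise reduce to an exceptional fourfold---is the delicate final step.
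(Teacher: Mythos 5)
Your proposal is a plan rather than a proof, and the places where you defer the work are exactly where the paper's content lies; none of them can be closed by the tools you name. The central gap is the base locus. Your cutting-by-$Y\in|H|$ induction can at best show that a general $D\in|mH|$ has good singularities away from $\Bs|H|$, and you handle the rest with the phrase ``a local study there combined with Kawamata--Viehweg/Nadel vanishing rules out a non-canonical centre.'' No such argument exists in your sketch, and this is the heart of the matter. The paper's key ingredient (Proposition \ref{prop:key}) is a discrepancy bound $a_{m,i}\geq \frac{m+n-3}{m}r_{m,i}-1$ for the exceptional divisors of a log resolution of $\Bs|mH|$, proved by contradiction: one builds an auxiliary lc pair $(X,\Delta)$ from general members of $|mH|$, takes a minimal lc centre $W\subseteq\Bs|mH|$, applies the Fujino--Gongyo subadjunction together with Kawamata's effective nonvanishing on $W$ (this is where $\dim W\leq 2$, i.e.\ the ladder-type Lemma \ref{lem:dimbs}, is needed) to get $H^0(W,mH|_W)\neq 0$, and then invokes Fujino's surjectivity of $H^0(X,mH)\to H^0(W,mH|_W)$, contradicting $W\subseteq\Bs|mH|$. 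Nothing in your outline produces this (or any) control of discrepancies over the base locus, so even part (1) is not established; note that part (1) is \emph{not} simply Floris's theorem plus induction, since Floris treats only $m=1$ and your induction never reaches the locus where the problem lives.

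Two further gaps are structural. For part (2), your induction cannot work as designed: reducing $(n,m)=(5,m)$ with $m\leq 3$ by a hyperplane section lands precisely in the fourfold cases $(4,1),(4,2),(4,3)$ where terminality genuinely fails (Example \ref{ex:ter->can}), so the inductive hypothesis gives nothing; you acknowledge this (``the delicate final step'') but offer no mechanism. The paper does not induct here: for $n\geq 5$ and $m\leq 3$ it argues directly from Proposition \ref{prop:key} (for $m=1$, $a_{1,i}-r_{1,i}\geq 2r_{1,i}-1>0$; for $m=2,3$, a comparison of a hypothetical divisor with $a_{m,i_0}=r_{m,i_0}=1$ against the resolution of $|H|$, using that its centre lies in $\Sing X$ and in $\Bs|H|$). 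For part (3), you dismiss $(n,m)=(4,3)$ as ``a direct computation''; in the paper this is the longest and most delicate argument, splitting on $H^4$, and in the residual case $H^4=1$, $h^0(X,H)=2$ it requires Fujita's classification of polarized fourfolds of sectional genus $2$, Riemann--Roch, and a study of the curve $C=Z_1\cap Z_2\cap W$ via numerical $2$-connectedness and the Catanese--Franciosi--Hulek--Reid base-point-freeness theorem for $|K_C|$. So the proposal has genuine, unfilled gaps at every essential point, although your ladder/cutting idea does correctly mirror the paper's proof of the auxiliary Lemma \ref{lem:dimbs}.
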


If $X$ is a smooth Fano variety of coindex $4$ and dimension $n$, then Floris \cite[Theorem 1.2]{Fl} and Liu \cite[Theorem 1.2]{L1} showed that $h^0(X, H) \geq n - 2$. If $X$ is singular, then we do not know whether $H^0(X, H) \neq 0$. This nonvanishing follows from the following:
\begin{conjecture}[{Ambro--Kawamata effective nonvanishing conjecture \cite{A}, \cite{K}}]
Let $(X, \Delta)$ be a klt pair, and $D$ be a Cartier divisor on $X$. If $D$ is nef and $D-(K_X+\Delta)$ is nef and big, then $H^0(X, D) \neq 0$.
\end{conjecture}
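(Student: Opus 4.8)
The plan is to combine Kawamata--Viehweg vanishing with an inductive adjunction argument that passes to a lower-dimensional log canonical center.

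Since $(X,\Delta)$ is klt, $D$ is Cartier, and $L := D - (K_X+\Delta)$ is nef and big, Kawamata--Viehweg vanishing for klt pairs gives $H^i(X, \mathcal{O}_X(D)) = 0$ for all $i > 0$; hence $h^0(X,D) = \chi(X,D)$. So it would suffice to prove $\chi(X,D) > 0$, but since the higher cohomology already vanishes this amounts to exhibiting a single nonzero section, and I would produce one directly rather than through Riemann--Roch, whose positivity is opaque once $\dim X \geq 2$.

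To create a section I would force a log canonical center and then argue by induction on $n = \dim X$. Writing $L \sim_{\mathbb{Q}} A + E$ with $A$ ample and $E \geq 0$, fix a point $x \in X \setminus (\Sing X \cup \Supp \Delta \cup \Supp E)$ and, for small rational $0 < \varepsilon \ll 1$, build a boundary $\Delta' = \Delta + \varepsilon E + \Gamma$ from a suitable divisor $\Gamma \sim_{\mathbb{Q}} cL$ $(c>0)$ of high multiplicity at $x$, so that $(X,\Delta')$ is log canonical with a unique minimal log canonical center $W \ni x$ while $D - (K_X+\Delta')$ remains nef and big (the ample slack in $A$ absorbs the perturbation). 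By Kawamata's subadjunction there is an effective $\Delta_W$ with $(K_X+\Delta')|_W \sim_{\mathbb{Q}} K_W + \Delta_W$ and $(W,\Delta_W)$ klt, so that $D|_W - (K_W+\Delta_W) \sim_{\mathbb{Q}} L|_W$. If $W$ can be taken with $\dim W < n$ and $L|_W$ nef and big, the inductive hypothesis yields a nonzero $s_W \in H^0(W, \mathcal{O}_W(D|_W))$; Nadel vanishing for the multiplier ideal $\mathcal{I}_W$ cutting out $W$ gives $H^1(X, \mathcal{I}_W \otimes \mathcal{O}_X(D)) = 0$, so the restriction $H^0(X,D) \to H^0(W, D|_W)$ is surjective and $s_W$ lifts to the desired section. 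The base cases $\dim W = 0$ (a point) and $\dim W = 1$ (Riemann--Roch on a curve) are immediate.

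The decisive obstacle---and the reason the statement remains a conjecture in arbitrary dimension---lies in the center-construction step: one must guarantee a minimal log canonical center $W$ of dimension strictly below $n$ through a prescribed point while simultaneously preserving the nef-and-big hypothesis on $W$. The restriction $L|_W$ is automatically nef but need not be big, since $W$ may lie in the locus where $L$ fails to be big; and the subadjunction boundary $\Delta_W$ is controlled only up to $\mathbb{Q}$-linear equivalence, so reconciling it with the integrality of the Cartier divisor $D$ is delicate. Making this construction unconditional in all dimensions is precisely what is open. For the application in this paper one needs only the Fano situation $D = H$ with $-K_X \sim (n-3)H$, where the rich positivity of $-K_X$ and the explicit geometry of coindex-$4$ varieties could be exploited to carry out the center construction by hand.
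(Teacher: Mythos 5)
The statement you were given is not a theorem of the paper but an open conjecture: the paper states it precisely because it cannot prove it, noting only that it has been verified in low dimensions \cite{K} and for Fano weighted complete intersections \cite{PST}. So there is no proof in the paper to compare against, and your proposal --- as you yourself concede in your last paragraph --- is a strategy outline, not a proof. Still, the gap deserves a sharper diagnosis than you give it. To make $(X,\Delta+\Gamma)$ non-klt at a prescribed point $x$ with $\Gamma \sim_{\mathbb{Q}} cL$, you need roughly $\mult_x \Gamma > n$, and a dimension count shows this forces $c^n \operatorname{vol}(L) \gtrsim n^n$; but ``nef and big'' guarantees only $\operatorname{vol}(L)>0$ with no positive lower bound, so in general one is forced to take $c \gg 1$. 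Then $D-(K_X+\Delta')$ is $\mathbb{Q}$-linearly $(1-c)L$ up to small perturbations, which is anti-big, and your parenthetical claim that ``the ample slack in $A$ absorbs the perturbation'' is simply false: $A$ is a fixed summand of the fixed class $L$ and provides no room to force $c<1$. This quantitative failure --- not only the issues you flag about $\dim W$, bigness of $L|_W$ on the augmented base locus, and the $\mathbb{Q}$-linear ambiguity of the subadjunction boundary --- is the reason the inductive scheme does not close. A smaller slip: surjectivity of $\chi$-type arguments aside, once $h^0=\chi$ you cannot ``produce one section directly'' and call it a computation of $\chi$; the Kawamata--Viehweg reduction is correct but buys nothing here, since Riemann--Roch positivity is exactly what is unknown.

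It is worth noting that the circle of ideas you sketch is precisely what the paper \emph{does} use, one level down, where it is rigorous: in Proposition \ref{prop:key} and Lemma \ref{lem:dimbs|mH|}, a minimal lc center $W$ of the lc pair built from general members of $|mH|$ is extracted, and its dimension is bounded by $2$ via the ladder argument of Lemma \ref{lem:dimbs} --- not by a tie-breaking construction. Then the Fujino--Gongyo subadjunction \cite[Theorem 1.2]{FG} gives a klt pair $(W,\Gamma)$ with $mH|_W-(K_W+\Gamma)$ ample, the \emph{verified} low-dimensional case of the conjecture \cite[Theorem 3.1]{K} supplies $H^0(W, mH|_W)\neq 0$, and Fujino's surjectivity theorem \cite[Theorem 2.2]{F} lifts the section, contradicting $W \subseteq \Bs|mH|$. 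In other words, the paper never needs the conjecture in full generality; it needs only its known cases on centers whose dimension the geometry bounds. That is the usable lesson: rather than attempting the induction in all dimensions, arrange the geometry so that every lc center you must treat falls into the range where effective nonvanishing is already a theorem.
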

\noindent This conjecture has been verified for low dimensional varieties \cite{K} and Fano weighted complete intersections \cite{PST}. Especially, \cite[Proposition 4.1 and Theorem 5.2]{K} say that if $X$ is a Gorenstein Fano fourfold with canonical singularities, then $h^0(X, H) \geq 2$. Although the methods of the present paper do not yield the results for higher coindex cases directly, we may still expect that Theorem \ref{thm:main1} for higher coindex would follow from the effective nonvanishing conjecture (cf. \cite{HS}).

In Theorem \ref{thm:main1} $(2)$, when $n=4$, one cannot expect that a general member in $|H|$ has terminal singularities. We give an example of Gorenstein terminal Fano fourfold $X$ such that a general member of the linear system $|H|$ does not have terminal singularities (see Example \ref{ex:ter->can} $(1)$).
In Theorem \ref{thm:main1} $(3)$, we do not know whether there is an example of a smooth Fano fourfold such that a general element in $|2H|$ does not have terminal singularities. See Remark \ref{rem:2H} for some partial result.

By \cite[Corollary 3]{L2}, if $X$ is a Gorenstein Fano variety of coindex $4$ and dimension $n$ with canonical singularities and $h^0(X, H) \geq n-2$, then $|mH|$ is base point free for any integer $m \geq 4$ (see Remark \ref{rem:ladder}). In particular, if $X$ is a smooth Fano variety of coindex $4$, then a general member in $|mH|$ is smooth for any integer $m \geq 4$.

One may expect that if a general member in $|H|$ has only mild singularities, then so does a general member in $|mH|$ for any $m \geq 2$. More generally, we may ask the following:

\begin{question}\label{ques:singlinsys}
Let $X$ be a smooth projective variety, and $L,M$ be divisors on $X$. Suppose that general members of $|L|$ and $|M|$ have only canonical/terminal singularities. Then does a general member of $|L+M|$ have also canonical/terminal singularities?
\end{question}

The answer is ``NO'' of course. Some counterexamples are given in Example \ref{ex:singlinsys}.

\subsection*{Organization} 
The paper is organized as follows. Section \ref{sec:main1} is devoted to proving Theorem \ref{thm:main1} $(1)$ and $(2)$. We also give some examples of terminal Fano fourfolds in which a general member of the fundamental linear system does not have terminal singularities (see Example \ref{ex:ter->can}). In Section \ref{sec:main2}, we negatively answer Question \ref{ques:singlinsys} in Example \ref{ex:singlinsys}, and we prove Theorem \ref{thm:main1} $(3)$.

\subsection*{Acknowledgments} 
The author would like to thank Miles Reid for calling attention to the paper \cite{CFHR} and In-Kyun Kim and Seung-Jo Jung for useful conversations. The authors is grateful to the referees for careful reading of the paper and valuable comments.

\section{Pluri-fundamental divisors on singular Fano varieties}\label{sec:main1}

In this section, we prove Theorem \ref{thm:main1} $(1)$ and $(2)$. For the definitions and basic properties of singularities of pairs, we refer to \cite{KM}.
We begin with fixing some notations.
Let $X$ be a Gorenstein Fano variety of coindex $4$ and dimension $n \geq 4$ with canonical singularities, and $H$ be the fundamental divisor on $X$. We have $-K_X = (n-3)H$. Assume that 
$h^0(X, H) \geq n - 2$.
Then $|mH| \neq \emptyset$ for each integer $m \geq 1$. Take a log resolution
$$
f_m \colon X_m \longrightarrow X
$$ 
of the ideal of the base locus $\Bs |mH|$. We may assume that $f_m$ is obtained by a sequence of blow-ups along smooth centers.
We write
$$
K_{X_m} = f_m^*K_X + \sum_{i} a_{m,i} E_{m,i}~~\text{ and }~~|f_m^*mH| = |M_m| + \sum_{i} r_{m,i} E_{m,i},
$$
where all $E_{m,i}$ are prime divisors, $|M_m|$ is the free part of $|f_m^*mH|$, and $\sum_{i} r_{m,i} E_{m,i}$ is the fixed part of $|f_m^*mH|$. By \cite[Theorem 1.1]{Fl}, a general member of $|H|$ is irreducible. 
Since $h^0(X, H) \geq 2$, it follows that $\dim \Bs|H| \leq n-2$; thus $\dim \Bs|mH| \leq n-2$ for each integer $m \geq 1$. Hence every $E_{m,i}$ is an $f_m$-exceptional divisor. Since $H$ is a Cartier divisor, all $a_{m,i}$ and $r_{m,i}$ are nonnegative integers.

\begin{lemma}\label{lem:dimbs}
$\dim \Bs|mH| \leq 2$ for any integer $m \geq 1$.
\end{lemma}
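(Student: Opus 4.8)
The plan is to reduce the statement to a single bound on $\dim \Bs|H|$ and then to run an induction on $n$ by cutting with general fundamental divisors. First I would record the elementary nesting of base loci: since $|(m-1)H| \neq \emptyset$, multiplying a section of $(m-1)H$ by a section of $H$ shows $\Bs|mH| \subseteq \Bs|H|$ for every $m \geq 1$. Hence it suffices to prove $\dim \Bs|H| \leq 2$. (One could alternatively invoke \cite[Corollary 3]{L2} to get $\Bs|mH| = \emptyset$ for $m \geq 4$, but the nesting shows that $m=1$ is the governing case.) The base case $n = 4$ is then immediate: as observed just before the lemma, $h^0(X,H) \geq 2$ already forces $\dim \Bs|H| \leq n-2 = 2$.

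For the inductive step ($n \geq 5$) I would take a general member $Y \in |H|$ and deduce the lemma for $X$ from the lemma for $Y$. By \cite[Theorem 1.1]{Fl}, $Y$ is irreducible with canonical singularities; being a Cartier divisor on the Gorenstein variety $X$ it is Gorenstein, and adjunction gives $-K_Y = (n-4)H_Y$ with $H_Y := H|_Y$, so $Y$ is a Gorenstein canonical Fano variety of coindex $4$ and dimension $n-1 \geq 4$. The key cohomological input is that the restriction map $H^0(X,H) \to H^0(Y, H_Y)$ is surjective, which follows from $H^1(X, \mathcal{O}_X) = 0$; this vanishing holds by Kawamata--Viehweg, since $X$ has rational (canonical Gorenstein) singularities and $-K_X$ is ample. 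In particular $h^0(Y, H_Y) \geq h^0(X,H) - 1 \geq (n-1) - 2$, so the inductive hypothesis applies to $Y$.

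Since $\Bs|H|$ is contained in every member of $|H|$, we have $\Bs|H| \subseteq Y$. Combined with the surjectivity above, a point $y \in Y$ lies in $\Bs|H_Y|$ if and only if every section of $H$ vanishes at $y$, which is to say $\Bs|H_Y| = \Bs|H|$ as sets. Therefore $\dim \Bs|H| = \dim \Bs|H_Y| \leq 2$ by induction, and combined with the nesting $\Bs|mH| \subseteq \Bs|H|$ this completes the proof.

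I expect the main obstacle to lie in the bookkeeping of the inductive step, namely checking that a general $Y$ genuinely satisfies all the hypotheses: that the Gorenstein, canonical, and coindex-$4$ conditions and the nonvanishing bound $h^0 \geq \dim Y - 2$ are all preserved under a general hyperplane section, and that the identification $\Bs|H_Y| = \Bs|H|$ is valid set-theoretically (this is precisely where the surjectivity of restriction, hence $H^1(X,\mathcal{O}_X)=0$, is used). Should the coindex of $Y$ happen to drop below $4$, then $Y$ is a del Pezzo or Mukai variety and $\Bs|H_Y|$ is empty or a single point, so the bound holds a fortiori.
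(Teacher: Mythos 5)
Your proof is correct and is essentially the paper's own ``ladder'' argument reorganized as an induction on dimension: where the paper iterates the cut $X \supset X_{n-1} \supset \cdots$ down to a Calabi--Yau threefold, using \cite[Proposition 4.1]{Fl} together with \cite[Theorem 5.50 and Proposition 5.51]{KM} to see that each cut is an irreducible Gorenstein canonical Fano of one lower dimension, you cut once, invoke \cite[Theorem 1.1]{Fl} and the surjectivity of restriction (via $H^1(X,\mathcal{O}_X)=0$) to identify $\Bs|H_Y|$ with $\Bs|H|$, and both arguments finish with the nesting $\Bs|mH| \subseteq \Bs|H|$. The one point to tighten is your coindex-drop case: there $H_Y$ is no longer the fundamental divisor of $Y$ but a multiple $aH'$ with $a \geq 2$ (by torsion-freeness of $\Pic(Y)$), so the del Pezzo/Mukai facts about the base locus of the \emph{fundamental} system are not the relevant ones; instead $|H_Y| = |aH'|$ is base point free by \cite[Remark 4.5]{L1}, which is exactly the citation the paper uses at this juncture.
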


\begin{proof}
By \cite[Proposition 4.1]{Fl}, $(X,X_{n-1})$ is a plt($=$purely log terminal) pair, where $X_{n-1} \in |H|$ is a general member. As $X_{n-1}$ is connected, \cite[Proposition 5.51]{KM} shows that $X_{n-1}$ is irreducible and normal. By \cite[Theorem 5.50]{KM}, $X_{n-1}$ has Gorenstein canonical singularities. Note that $-K_{X_{n-1}} = ((n-1)-3)H_{n-1}$, where $H_{n-1}:=H|_{X_{n-1}}$. If $n \geq 5$, then $X_{n-1}$ is an $(n-1)$-dimensional Gorenstein canonical Fano variety of index $i_{X_{n-1}} \geq (n-1)-3$ with $h^0(X_{n-1}, H_{n-1}) \geq (n-1)-2$. If $i_{X_{n-1}} > (n-1)-3$, then $|H_{n-1}|$ is base point free (cf. \cite[Remark 4.5]{L1}) so that $(X_{n-1}, X_{n-2})$ is a plt pair, where $X_{n-2} \in |H_{n-1}|$ is a general member. If $i_{X_{n-1}}=(n-1)-3$, then by \cite[Proposition 4.1]{Fl}, $(X_{n-1}, X_{n-2})$ is also a plt pair. Continuing this process, we finally obtain a Calabi--Yau threefold $X_3$ with canonical singularities and $h^0(X_3, H_4|_{X_3}) \geq 1$. Notice that
$$
\Bs|H| = \Bs|H_{n-1}| = \cdots = \Bs|H_4|=\Bs|H_4|_{X_3}|.
$$
This shows $\dim \Bs|H| \leq 2$. Note that $\Bs|mH| \subseteq \Bs|H|$ for any $m \geq 2$. Then the lemma follows.
\end{proof}

\begin{remark}\label{rem:ladder}
$(1)$ If the Ambro--Kawamata effective nonvanishing conjecture is true for Gorenstein Fano variety of coindex $4$ with canonical singularities, then \cite[Proposition 4.1]{Fl} and the ``ladder'' argument as in the proof of Lemma \ref{lem:dimbs} show that $h^0(X, H) \geq n-2$.\\[3pt]
$(2)$ If $X$ is a Gorenstein Fano variety of coindex $4$ with canonical singularities and $h^0(X, H) \geq n-2$, then the ``ladder'' argument and \cite[Theorem 2]{L2} show that $|mH|$ is base point free for every integer $m \geq 4$.
\end{remark}

The following proposition, inspired by \cite[Proposition 9]{H}, is the key ingredient of the proof of Theorem \ref{thm:main1}.

\begin{proposition}\label{prop:key}
For an integer $m \geq 1$, we have
$$
a_{m,i} \geq \frac{m+n-3}{m} r_{m,i} - 1~~\text{ for all $i$}.
$$
\end{proposition}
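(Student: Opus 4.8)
The plan is to restate the inequality in valuation-theoretic terms and then prove it by induction on $n$ along the ladder produced in Lemma \ref{lem:dimbs}. Since $f_m$ is a log resolution of $\mathfrak{b}_m:=\mathfrak{b}(|mH|)$, one has $\nu_{E_{m,i}}(\mathfrak{b}_m)=r_{m,i}$ and the log discrepancy $A_X(E_{m,i})=a_{m,i}+1$, so the asserted bound $a_{m,i}\ge\frac{m+n-3}{m}r_{m,i}-1$ is equivalent to $A_X(E_{m,i})\ge\frac{m+n-3}{m}\,\nu_{E_{m,i}}(\mathfrak{b}_m)$ for all $i$; as a divisor computing the log canonical threshold occurs on any log resolution, this is in turn equivalent to the single estimate $\operatorname{lct}(X;\mathfrak{b}_m)\ge\frac{m+n-3}{m}$. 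When $r_{m,i}=0$ the inequality reads $a_{m,i}\ge-1$, which holds because $X$ is canonical; so I only need to control divisors $E$ over $X$ with $\nu_E(\mathfrak{b}_m)>0$, whose centers lie in $\Bs|mH|\subseteq\Bs|H|$.

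The main argument is an induction on $n\ge 3$, proving for every Gorenstein canonical Fano (resp.\ Calabi--Yau, when the dimension is $3$) variety $W$ of coindex $4$ with fundamental divisor $H_W$ that $A_W(E)\ge\frac{m+(\dim W)-3}{m}\,\nu_E(\mathfrak{b}(|mH_W|))$ for every divisorial valuation $E$. For the inductive step, let $Y\in|H|$ be a general member and set $H_Y:=H|_Y$. By \cite[Proposition 4.1]{Fl} the pair $(X,Y)$ is plt, and, exactly as in the proof of Lemma \ref{lem:dimbs}, $Y$ is a normal Gorenstein canonical Fano variety of coindex $4$ and dimension $n-1$ to which the inductive hypothesis applies. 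Fix a divisor $E$ over $X$ with center in $\Bs|mH|\subseteq Y$ and pass to a common log resolution of $X$, $Y$, $\mathfrak{b}(|H|)$ and $\mathfrak{b}_m$; then $E$ restricts to a divisor $E'$ over $Y$. I will combine three facts. First, adjunction on this resolution (with $Y$ Cartier, $K_{\widetilde Y}=(K_{\widetilde X}+\widetilde Y)|_{\widetilde Y}$) gives $A_Y(E')=A_X(E)-\nu_E(Y)$. Second, because Kawamata--Viehweg vanishing makes $H^0(X,mH)\to H^0(Y,mH_Y)$ surjective (as $(m-1)H-K_X=(m+n-4)H$ is ample for $n\ge 4$), the base ideal restricts, $\mathfrak{b}(|mH_Y|)=\mathfrak{b}_m\cdot\mathcal{O}_Y$, so $\nu_{E'}(\mathfrak{b}(|mH_Y|))=\nu_E(\mathfrak{b}_m)$. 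Third, for the general member $Y$ one has $\nu_E(Y)=\nu_E(\mathfrak{b}(|H|))$, and the inclusion $\mathfrak{b}_m\supseteq\mathfrak{b}(|H|)^m$ yields $\nu_E(\mathfrak{b}_m)\le m\,\nu_E(\mathfrak{b}(|H|))$, hence $\nu_E(Y)\ge\frac1m\,\nu_E(\mathfrak{b}_m)$.

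Putting these together with the inductive hypothesis on $Y$ (coefficient $\frac{m+n-4}{m}$), I obtain
\[
A_X(E)=A_Y(E')+\nu_E(Y)\ge \tfrac{m+n-4}{m}\,\nu_E(\mathfrak{b}_m)+\tfrac1m\,\nu_E(\mathfrak{b}_m)=\tfrac{m+n-3}{m}\,\nu_E(\mathfrak{b}_m),
\]
which is exactly the desired estimate on $X$; the extra $\frac1m$ supplied by the third fact is precisely what matches the change of coefficient as the coindex-$4$ ladder raises the index by one. This reduces everything to the base case $\dim W=3$, where $W$ is a Gorenstein canonical Calabi--Yau threefold and the required bound is $A_W(E)\ge\nu_E(\mathfrak{b}(|mH_W|))$, i.e.\ $\operatorname{lct}(W;\mathfrak{b}(|mH_W|))\ge 1$; equivalently, that a general member of $|mH_W|$ cuts out a log canonical pair on $W$.

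I expect the base case to be the main obstacle. On the threefold $W$ the base scheme has codimension $\ge 2$ (its general member is irreducible, so there is no fixed divisor), and one must show that the general pluri-fundamental member produces an lc pair despite the possibly non-reduced base ideal; I would argue this from the smoothness of a general member away from the base locus (Bertini), the normality coming from the ladder, and the low dimension of $\Bs$, invoking the surface/threefold adjunction results of \cite{CFHR} if necessary. A secondary technical point is the valuation-theoretic adjunction $A_Y(E')=A_X(E)-\nu_E(Y)$: it must be justified on a simultaneous log resolution and uses in an essential way that $(X,Y)$ is plt and $Y$ is normal, so that $E$ restricts to a genuine divisorial valuation over $Y$.
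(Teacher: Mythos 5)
The inductive step you propose has the right numerology and could be made rigorous, but your proposal has a genuine gap exactly where the content of the proposition lives: the base case. Your ladder induction reduces everything to showing, for a Gorenstein canonical Calabi--Yau threefold $W$, that $\operatorname{lct}(W;\mathfrak{b}(|mH_W|))\geq 1$ for all $m\geq 1$, and you leave this unproved, offering only Bertini, normality, and \cite{CFHR} as possible tools. These do not address the problem: Bertini says nothing along $\Bs|mH_W|$, which is precisely where the singularities of a general member sit, and \cite{CFHR} concerns embeddings of curves, not log canonical thresholds of base ideals on threefolds. The paper, by contrast, never descends the ladder inside the proof of Proposition \ref{prop:key}; it argues directly on $X$ in all dimensions at once: assuming the bound fails, it takes the threshold $c_0<\frac{m+n-3}{m}$, forms an lc-but-not-klt pair $(X,\Delta)$ from $k$ general members of $|mH|$, takes a minimal lc center $W\subseteq\Bs|mH|$ (so $\dim W\leq 2$ by Lemma \ref{lem:dimbs}), applies the Fujino--Gongyo subadjunction \cite{FG} and Kawamata's effective nonvanishing in dimension $\leq 2$ \cite{K} to get $H^0(W,mH|_W)\neq 0$, and then Fujino's extension theorem \cite{F} to make the restriction $H^0(X,mH)\to H^0(W,mH|_W)$ surjective, contradicting $W\subseteq\Bs|mH|$. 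Your base case would require exactly this machinery (it goes through on $W$ verbatim, since $K_W\sim 0$ makes $mH_W-(K_W+\Delta)$ ample whenever $c_0<1$), so the induction does not simplify anything; it merely postpones the entire difficulty to a step you have not carried out.

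There is also a technical flaw in the inductive step as written: a divisor $E$ over $X$ whose center lies in $Y$ need not meet the strict transform of $Y$ on a given log resolution (iterated blow-ups over a point of $Y$, with later centers chosen off the strict transform, produce such $E$), so the claim that ``$E$ restricts to a divisorial valuation $E'$ over $Y$'' and the adjunction $A_Y(E')=A_X(E)-\nu_E(Y)$ cannot be invoked valuation-by-valuation. The correct substitute is plt inversion of adjunction (Koll\'ar--Shokurov connectedness) applied to the pair $(X,Y+c\,\mathfrak{b}_m)$, with a small perturbation of $c$ to pass between the non-strict inequality of the inductive hypothesis and the klt/plt conditions; you flag this issue yourself but do not resolve it. For what it is worth, the other two ingredients are sound: the restriction of base ideals $\mathfrak{b}(|mH_Y|)=\mathfrak{b}_m\cdot\mathcal{O}_Y$ does follow from Kawamata--Viehweg vanishing, and the estimate $\nu_E(Y)\geq\frac{1}{m}\nu_E(\mathfrak{b}_m)$ holds for every member $Y$ and every $E$, since $\mathcal{O}_X(-Y)\subseteq\mathfrak{b}(|H|)$ and $\mathfrak{b}(|H|)^m\subseteq\mathfrak{b}_m$.
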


\begin{proof}
Suppose that $a_{m,i} - \frac{m+n-3}{m} r_{m,i} < -1$ for some $i$.
Let
$$
c_0:= \inf\{c \mid \text{$a_{m,i} - cr_{m,i} \leq -1$ for some $i$} \}.
$$
Then $0 < c_0 < \frac{m+n-3}{m}$.
For an integer $k > c_0$, choose $k$ general members $D_1, \ldots, D_k \in |mH|$, and let $\Delta:=c_0 \cdot \frac{D_1+ \cdots + D_k}{k}$. Then the pair $(X, \Delta)$ is lc($=$log canonical) but not klt($=$Kawamata log terminal). Let $W$ be a minimal lc center of the lc pair $(X, \Delta)$.
Since $D_1, \ldots, D_k \in |mH|$ are general, $(X, \Delta)$ is klt outside the base locus $\Bs |mH|$ (cf. \cite[Lemma 5.1]{A}). Thus $W$ is contained in $\Bs |mH|$, so $\dim W \leq 2$ by Lemma \ref{lem:dimbs}.

By the generalization of Kawamata's subadjunction formula \cite[Theorem 1.2]{FG}, there exists an effective divisor $\Gamma$ on $W$ such that
$$
(K_X+\Delta)|_W \sim_{\Q} K_W + \Gamma
$$
and the pair $(W, \Gamma)$ is klt. Note that
$$
mH - (K_X + \Delta) \sim_{\Q} (m+n-3 - c_0 m)H.
$$
Since $c_0 m < m+n-3$, it follows that $mH - (K_X + \Delta)$ is ample. Then $mH|_W - (K_W + \Gamma)$ is ample. Recall that $\dim W \leq 2$. By \cite[Theorem 3.1]{K}, 
$H^0(W, mH|_W) \neq 0$.
Now, since $mH - (K_X + \Delta)$ is ample and $W$ is an lc center of the lc pair $(X, \Delta)$, we can apply \cite[Theorem 2.2]{F} to see that the restriction map
$$
H^0(X, mH) \longrightarrow H^0(W, mH|_W)
$$
is surjective. However, $W \subseteq \Bs |mH|$, so this restriction map is the zero map. We obtain $H^0(W, mH|_W)=0$, which is a contradiction. Thus the proposition holds.
\end{proof}

We are ready to prove Theorem \ref{thm:main1} $(1)$ and $(2)$.

\begin{proof}[Proof of Theorem \ref{thm:main1} (1) and (2)]
Recall that $X$ is a Gorenstein Fano variety of coindex $4$ and dimension $n$ with canonical singularities. We assume that $h^0(X, H) \geq n-2$. Let $Y_m \in |mH|$ be a general element for an integer $m \geq 1$.

\medskip 

\noindent $(1)$ We want to prove that $Y_m$ has canonical singularities. If $(X, Y_m)$ is a plt pair, then \cite[Theorem 5.50 and Proposition 5.51]{KM} imply that $Y_m$ has canonical singularities since $Y_m$ is Gorenstein. Thus it is enough to show that the pair $(X, Y_m)$ is plt. The birational morphism $f_m \colon X_m \to X$ is a log resolution of $(X, Y_m)$. We have
$$
K_{X_m} + f_{m,*}^{-1} Y_m = f_m^*(K_X + Y_m) + \sum_i (a_{m,i}-r_{m,i}) E_{m,i}.
$$
If $r_{m,i} = 0$, then 
$a_{m,i}-r_{m,i} \geq 0 > -1$.
If $r_{m,i} \geq 1$, then Proposition \ref{prop:key} implies that 
$$
a_{m,i} - r_{m,i} \geq \frac{n-3}{m}r_{m,i} -1 > -1.
$$ 
Thus $(X, Y_m)$ is a plt pair.

\medskip

\noindent $(2)$ Assume that $X$ has terminal singularities and $n=\dim X \geq 5$ or $m \geq 4$. We want to show that $Y_m$ has terminal singularities. If $m \geq 4$, then \cite[Theorem 2]{L2} (see also Remark \ref{rem:ladder} $(2)$) implies that $|mH|$ is base point free; hence $Y_m$ has terminal singularities. From now on, assume that $n \geq 5$ and $1 \leq m \leq 3$. We know that $Y_m$ is a normal projective variety with canonical singularities. Let $Y_m':=f_{m,*}^{-1} Y_m$ be the strict transform of $Y_m$ under $f_m$. Since $Y_m' \in |M_m|$ is a general element, $Y_m'$ is smooth. Then 
$$
f_m':=f_m|_{Y_m'} \colon Y_m' \longrightarrow Y_m
$$
is a log resolution of $Y_m$. We have
$$
K_{Y_m'} = {f_m'}^* K_{Y_m} + \sum_i (a_{m,i} - r_{m,i}) E_{m,i}|_{Y_m'}.
$$
Since $X$ has terminal singularities, we have $a_{m,i} \geq 1$ for all $i$.

Consider the case $m=1$. Note that $\frac{m+n-3}{m}=n-2 \geq 3$.
If $r_{1,i} \geq 1$, then Proposition \ref{prop:key} implies that 
$a_{1,i}-r_{1,i} \geq 2r_{1,i} - 1 >0$.
If $r_{1,i}=0$, then $a_{1,i} - r_{1,i} >0$. Thus $Y_1$ has terminal singularities.

Suppose now that $Y_m$ does not have terminal singularities for some $2 \leq m \leq 3$. Then there is some $i_0$ such that $a_{m,i_0} = r_{m,i_0} \geq 1$ and $E_{m,i_0}|_{Y_m'}$ is an $f_m|_{Y_m'}$-exceptional divisor. Since $Y_m$ has terminal singularities outside $\Bs|mH|$, we see that $f_m(E_{m,i_0}) \subseteq \Bs|mH|$.
Since $n \geq 5$ and $2 \leq m \leq 3$, we have $\frac{m+n-3}{m} \geq \frac{5}{3}$. If $r_{m,i} \geq 2$, then Proposition \ref{prop:key} implies that
$a_{m,i}-r_{m,i} \geq \frac{2}{3}r_{m,i} - 1 > 0$. Thus $a_{m,i_0}=r_{m,i_0} = 1$. If $f_m(E_{m,i_0}) \not\subseteq \Sing X$, then $\dim f_m(E_{m,i_0}) = n-2$ since $f_m$ is a composition of smooth center blow-ups. This means that $f_m(E_{m,i_0})$ is a divisor on $Y_m$ and $E_{m,i_0}|_{Y_m'}$ is not an $f_m|_{Y_m'}$-exceptional divisor. Thus $f_m(E_{m,i_0}) \subseteq \Sing X$, and $\dim f_m(E_{m,i_0}) \leq n-3$ because $X$ has terminal singularities.
By taking further blow-ups, we may assume that $f_1=f_m$ and $X_1=X_m$. Then there is an $i_1$ such that $E_{1,i_1} = E_{m,i_0}$. We have $a_{1,i_1}=a_{m,i_0} = 1$. Now, since $f_m(E_{m, i_0}) \subseteq \Bs|mH| \subseteq \Bs|H|$, it follows that $r_{1,i_1} \geq 1$. Thus $a_{1,i_1} - r_{1,i_1} \leq 0$. Note that $E_{1,i_1}|_{Y_1'}$ is an $f_1|_{Y_1'}$-exceptional divisor. We get a contradiction to that $Y_1$ has terminal singularities. Hence $Y_m$ has terminal singularities for any $2 \leq m \leq 3$. 
\end{proof}

Finally, we provide some examples of terminal Fano fourfolds in which a general element in the fundamental linear system does not have terminal singularities.

\begin{example}\label{ex:ter->can}
$(1)$ Let $Z:=X_{2,6}$ be a complete intersection in $\P(1,1,1,1,2,3)$ of a general quadric defined in the first four linear variables $x_0, \ldots, x_3$ and a general sextic (cf. \cite[Theorem 1]{M}). Then $Z$ is a Gorenstein terminal Fano threefold of index $1$, and $\Sing Z = \{p=(0:0:0:0:-1:1)\}$. A general member of $|H_Z|$ is singular at $p$, where $H_Z=-K_Z$ is the fundamental divisor of $Z$. Let $X:=Z \times \P^1$ so that $X$ is a Gorenstein Fano fourfold of index $1$ with terminal singularities. Note that $H=-K_X = \pi_1^*(-K_Z) + \pi_2^*(-K_{\P^1})$ is the fundamental divisor of $X$, where $\pi_1 \colon X \to Z$ and $\pi_2 \colon X \to \P^1$ are projections. A general element $Y$ in $|H|$ has one dimensional singular locus $\{p\} \times \P^1$. Since $\dim Y = 3$, it follows that $Y$ does not have terminal singularities. Here $Y$ is a Gorenstein Calabi--Yau threefold with canonical singularities. \\[3pt]
$(2)$ Let $X:=X_9$ be a weighted hypersurface in $\P(1,1,1,1,3,3)$ of degree $9$ (cf. quasismooth Fano 4-fold hypersurfaces ID 8 in \cite{GRDB} based on \cite{BK}).
Then $X$ is a non-Gorenstein $\Q$-Fano fourfold with terminal singularities such that $-K_X$ is a hyperplane with $(-K_X)^4=1$. Note that $\Sing X$ consists of three terminal singular points of the type $\frac{1}{3}(1,1,1,1)$. A general element in $|-K_X|$ is a weighted hypersurface $S_9$ in $\P(1,1,1,3,3)$ of degree $9$, and $S_9$ is a Gorenstein canonical Calabi--Yau threefold. Note that $\Sing S_9$ consists of three (non-terminal) canonical singular points of the type $\frac{1}{3}(1,1,1)$.
\end{example}

\section{Pluri-fundamental divisors on smooth Fano varieties}\label{sec:main2}

In this section, we first answer Question \ref{ques:singlinsys} by constructing smooth projective varieties $X$ and divisors $M$ such that general members in $|M|$ are smooth but all members in $|mM|$ are not normal for some $m \geq 2$, and then prove Theorem \ref{thm:main1} $(3)$.

\begin{example}\label{ex:singlinsys}
$(1)$ If $E$ is an exceptional divisor on a smooth projective variety, then $|mE|=\{mE\}$ for all $m \geq 1$. Now, $E$ is smooth, but $mE$ is non-reduced for any $m \geq 2$.\\[3pt]
$(2)$ Let $C$ be a smooth projective curve of genus $2$. There are two distinct points $P, Q$ on $C$ such that $2P \sim 2Q \sim K_C$. In particular, $Q-P \in \Pic^0(C)$ is a $2$-torsion. 
We can also find $\tau \in \Pic^0(C)$ such that $H^0(C, P+\tau)=H^0(C, Q-P+\tau)=H^0(C, 2\tau)=0$.
Let $E:=\mathcal{O}_C(P) \oplus \mathcal{O}_C(\tau)$, and $S:=\P(E)$ with the natural projection $\pi \colon S \to C$ and the tautological divisor $H$, i.e., $\mathcal{O}_S(H)=\mathcal{O}_{\P(E)}(1)$. Let $A:= H$ and $B:=H + \pi^*(Q-P)$. Then $A,B$ are sections of $\pi$, so they are smooth irreducible curves isomorphic to $C$. Furthermore, $A,B$ satisfy the following:
\begin{enumerate}
 \item[$\bullet$] $A^2=B^2=A.B=1$,
  \item[$\bullet$] $A \not \sim B$ but $2A \sim 2B$,
 \item[$\bullet$] $h^0(S, A)=h^0(S, B)=1$, and
 \item[$\bullet$] $h^0(S, 2A)=h^0(S, 2B)=2$.
\end{enumerate} 
Notice that $A,B$ meet at one point $p$ on $S$ and every member of $|2A|=|2B|$ has multiplicity at least $2$ at $p$. Thus every member in $|2A|=|2B|$ is not normal. \\[3pt]
%To see this assertion, let $s \in H^0(S, A)$ and $t \in H^0(S, B)$ be nonzero sections. Then $s^2, t^2$ form a basis of $H^0(S, 2A)=H^0(S, 2B)$. For any $a, b \in \C$, the effective divisor div$(as^2 + bt^2)$ is singular at $p$.
$(3)$ \cite[Example 5.9]{PST}
For an integer $m \geq 1$, let 
$$
X=X_{(2m+1)(2m+2)} \subseteq \P(\underbrace{1, \ldots, 1}_{1+2m(2m+1)}, 2m+1, 2m+2)
$$
 be a weighted hypersurface of degree $(2m+1)(2m+2)$. Then $X$ is a smooth Fano variety of index $2$. If $H$ is the fundamental divisor of $X$, then a general member of $|H|$ is smooth. However, $|-2iH|$ does not contain a smooth member for any $1 \leq i \leq m$. In this case, a general member in $|-2iH|$ has terminal singularities.
\end{example}

We now turn to the proof of Theorem \ref{thm:main1} $(3)$. 

\begin{proof}[Proof of Theorem \ref{thm:main1} (3) except the case $(n,m)=(4,3)$]
Let $X$ be a smooth Fano variety of coindex $4$ and dimension $n$ with fundamental divisor $H$. By Theorem \ref{thm:main1} $(2)$, we only have to consider the cases $(n,m)=(4,1), (4,3)$. If $(n,m)=(4,1)$, then $H=-K_X$. Now, \cite[Theorem 2]{H} says that a general element in $|-K_X|$ has terminal singularities.
\end{proof}

\begin{remark}
Let $X$ be a smooth Fano variety of coindex $4$, and $H$ be the fundamental divisor of $X$. By \cite[Theorem 4]{L2}, $|mH|$ is base point free for any integer $m \geq 4$; hence a general element $Y_m \in |mH|$ is smooth in this case. But there is a smooth Fano fourfold $X$ of coindex $4$ such that every member in $|H|$ is singular (see \cite[Example 2.12]{HV}).
\end{remark}

To finish the proof of Theorem \ref{thm:main1}, it only remains to prove that if $H$ is the fundamental divisor of a smooth Fano fourfold $X$ of coindex $4$, then a general element $Y \in |3H|$ has terminal singularities. We know that $Y$ has canonical singularities.
 As in Section \ref{sec:main1}, take a log resolution $f \colon X_3 \to X$ of the ideal of the base locus $\Bs|3H|$. We may assume that $f$ is isomorphic outside $\Bs|3H|$ and it is obtained by a sequence of blow-ups along smooth centers. We write
$$
K_{X_3} = f^*K_X + \sum_{i} a_i E_i~~\text{ and }~~|f^*3H|=|M| + \sum_i r_i E_i,
$$
where all $E_i$ are $f$-exceptional prime divisors and $|M|$ is the free part of $|f^*3H|$ and $\sum_i r_i E_i$ is the fixed part of $|f^*3H|$. We may assume that $f(E_i) \subseteq \Bs|3H|$ for all $i$. All $a_i$ and $r_i$ are positive integers.

\begin{lemma}\label{lem:mult=2}
If a general element $Y$ in $|3H|$ has at worst isolated singularity at $x$ and $\mult_x Y \leq 2$, then $Y$ has terminal singularity at $x$.
\end{lemma}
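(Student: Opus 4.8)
The plan is to show that a general element $Y \in |3H|$ with an isolated singularity of multiplicity at most $2$ at a point $x$ is terminal, by analyzing the discrepancies on the resolution $f \colon X_3 \to X$ restricted to the strict transform $Y':=f_*^{-1}Y$. Since $X$ is smooth, we have $-K_X = H$ is very ample away from $\Bs|3H|$, and $Y$ is a Gorenstein Calabi--Yau threefold (as $-K_Y = (K_X+Y)|_Y \cdot (\text{coefficient})$ computes to trivial) with canonical singularities. The restriction $f':=f|_{Y'} \colon Y' \to Y$ is a log resolution, and we have
$$
K_{Y'} = {f'}^* K_Y + \sum_i (a_i - r_i) E_i|_{Y'}.
$$
Thus $Y$ fails to be terminal precisely when some divisor $E_{i_0}|_{Y'}$ with $a_{i_0} - r_{i_0} \leq 0$ is genuinely $f'$-exceptional and maps to $x$.

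First I would localize at $x$: since the singularity is isolated, it suffices to control the discrepancy of divisors lying over $x$ alone, and by Proposition \ref{prop:key} (applied with $n=4$, $m=3$, so $\frac{m+n-3}{m}=\frac{4}{3}$) we already have $a_i - r_i \geq \frac{1}{3}r_i - 1$, which is positive as soon as $r_i \geq 4$. So the dangerous divisors are those with small $r_i \in \{1,2,3\}$, and for these the estimate from Proposition \ref{prop:key} is too weak on its own. The key extra input is the multiplicity hypothesis $\mult_x Y \leq 2$. I would extract this by looking at the first blow-up $\sigma \colon \widehat{X} \to X$ at the point $x$ (which we may assume $f$ factors through), with exceptional divisor $E$: then $a(E) = n-1 = 3$ for the blow-up of a smooth point of a threefold-in-fourfold situation --- more precisely the blow-up of the fourfold $X$ at the point $x$ gives $a(E)=3$ --- while the coefficient $r$ of $E$ in the fixed part of $|f^*3H|$ is controlled by $\mult_x(\text{general member of } |3H|) = \mult_x Y \leq 2$. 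Hence on this first exceptional divisor $a - r \geq 3 - 2 = 1 > 0$, and its restriction to $Y'$ has positive discrepancy.

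The main obstacle, and the heart of the argument, will be passing from the first blow-up to all divisors over $x$: a divisor $E_{i_0}$ appearing after further blow-ups could in principle accumulate a large $r_{i_0}$ relative to its $a_{i_0}$. The strategy here is to combine the local bound from the multiplicity with the global bound from Proposition \ref{prop:key} and the terminality of $X$ (so $a_{i_0} \geq 1$ always). I would argue that if $a_{i_0} - r_{i_0} \leq 0$ then, because $X$ is smooth and $f$ is a composite of smooth-center blow-ups, the center $f(E_{i_0})$ is contained in the isolated singular point $x$ of $Y$, forcing $\dim f(E_{i_0}) = 0$; I would then track how $\mult_x Y \leq 2$ bounds the multiplicity of the ideal of $\Bs|3H|$ at $x$, and feed this into a comparison of $a_{i_0}$ and $r_{i_0}$ via the standard formula relating discrepancies of iterated blow-ups to the orders of vanishing. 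The delicate point will be ensuring that the multiplicity-$2$ condition propagates correctly through the tower of blow-ups so that $r_{i_0}$ cannot outpace $a_{i_0}$; I expect that a double-point singularity is mild enough (e.g. a node or a degenerate quadratic point) that its resolution discrepancies stay $\geq 1$, matching the well-known fact that compound du Val / cDV points are terminal. Concluding, the combination $a_{i_0} - r_{i_0} > 0$ for every $E_{i_0}$ over $x$ yields that $Y$ is terminal at $x$.
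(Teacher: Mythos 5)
Your first step (the point blow-up at $x$: discrepancy $3$ over the smooth fourfold $X$, fixed-part coefficient $r \leq \mult_x Y \leq 2$, hence positive discrepancy on $Y$) and your use of Proposition \ref{prop:key} to dispose of all divisors with $r_i \geq 4$ both match the paper. But the case you yourself flag as ``the heart of the argument'' --- divisors over $x$ with $r_i \in \{1,2,3\}$ arising from later blow-ups --- is left genuinely open, and the strategy you sketch for it would fail. Propagating the bound $\mult_x Y \leq 2$ through the tower of blow-ups cannot work as a purely local argument, because the local heuristic you fall back on is false: an isolated double point of a threefold need not be terminal. For instance $\{x_1^2+x_2^4+x_3^4+x_4^4=0\}$ is an isolated multiplicity-$2$ point whose general hyperplane section through the origin is not Du Val, so by Reid's criterion it is not cDV and hence not terminal. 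Thus ``isolated singularity of multiplicity $\leq 2$ implies terminal'' is not a fact one can appeal to; the lemma is not local, and some global input from $|3H|$ is indispensable precisely in the range $r_i \leq 3$.

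The missing observation that closes this case is elementary: since $X$ is smooth of dimension $4$ and $f$ is a composition of blow-ups along smooth centers, every $f$-exceptional divisor $E_i$ with $f(E_i)=\{x\}$ other than the exceptional divisor of the first point blow-up has $a_i \geq 4$ (its center lies inside an earlier exceptional divisor over $x$, which already has discrepancy at least $3$). Then for $r_i \leq 3$ one gets $a_i - r_i \geq 1 > 0$ for free, while for $r_i \geq 4$ Proposition \ref{prop:key} gives $a_i \geq \frac{4}{3}r_i - 1 > r_i$; together with your handling of the first blow-up this covers every divisor over $x$, which is exactly the paper's proof. (Two incidental slips in your setup, neither load-bearing: by adjunction $K_Y = 2H|_Y$, so $Y \in |3H|$ is not Calabi--Yau; and smoothness of $X$ gives much more than the bound $a_i \geq 1$ you extract from terminality --- indeed it is the stronger bound $a_i \geq 4$ that the argument actually needs.)
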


\begin{proof}
We may assume that $f$ factors through the blow-up of $X$ at $x$ with exceptional divisor $E_{i_0}$. We have $a_{i_0} = 3$ and $r_{i_0} \leq 2$, so $a_{i_0} - r_{i_0} > 0$.
For every $f$-exceptional divisor $E_i$ with $f(E_i) = \{x\}$ but $E_i \neq E_{i_0}$, we have $a_i \geq 4$ since $f$ is a composition of smooth center blow-ups.
It is impossible that $a_i = r_i  \geq 4$ because Proposition \ref{prop:key} says that $a_i \geq \frac{4}{3}r_i - 1 > r_i$ when $r_i \geq 4$. Thus $a_i - r_i > 0$, and hence, $Y$ has terminal singularity at $x$.
\end{proof}

\begin{lemma}\label{lem:dimbs|mH|}
$\dim\Bs |mH| \leq 1$ for any integer $m \geq 2$. In particular, $\dim \Sing Y \leq 1$.
\end{lemma}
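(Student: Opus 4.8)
The plan is to restrict everything to a general member $V:=X_3\in|H|$ and convert the question into a statement about a pluricanonical system on a Gorenstein surface. Since $X$ is a smooth Fano fourfold of coindex $4$ we have $H=-K_X$, and by the plt ``ladder'' used in the proof of Lemma~\ref{lem:dimbs}, $V$ is a connected Gorenstein Calabi--Yau threefold with canonical singularities on which $L:=H|_V$ is ample. First I would note that $\Bs|mH|\subseteq\Bs|H|\subseteq V$ and that the restriction $H^0(X,mH)\to H^0(V,mL)$ is surjective for $m\ge 2$: its cokernel lies in $H^1(X,(m-1)H)=H^1(X,K_X+mH)$, which vanishes by Kawamata--Viehweg since $mH$ is ample. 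Hence $\Bs|mH|=\Bs|mL|$, and it suffices to prove $\dim\Bs|mL|\le 1$ on $V$ for $m\ge 2$.

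Because the base locus of the movable part of any linear system has codimension at least $2$, a $2$-dimensional component of $\Bs|mL|$ would have to be a fixed prime divisor $P$ of $|mL|$. Then $P\subseteq\Bs|mL|\subseteq\Bs|L|$, so $P$ is contained in every member of $|L|$; fixing a general $S\in|L|$, the surface $P$ (which has the same dimension as $S$) is therefore a union of irreducible components of $S$. Now $S$ is an effective Cartier divisor on the Gorenstein threefold $V$, hence a connected Gorenstein surface, and adjunction gives $\omega_S=\mathcal O_V(L)|_S=L|_S$, which is ample as the restriction of an ample line bundle. Thus $S$ is a connected Gorenstein surface with ample canonical class, possibly reducible or non-normal.

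The next step is to transport the base locus down to $S$. Restricting through $0\to\mathcal O_V((m-1)L)\to\mathcal O_V(mL)\to\mathcal O_S(mK_S)\to 0$ and using $H^1(V,(m-1)L)=0$ (Kawamata--Viehweg, as $(m-1)L$ is ample for $m\ge2$), the map $H^0(V,mL)\to H^0(S,mK_S)$ is surjective. Since every section of $mL$ vanishes along $P$, so does every section of $mK_S$; as $P$ is a union of components of $S$, this says exactly that $P$ is a fixed component of the pluricanonical system $|mK_S|$ on $S$. This I would rule out using the theory of adjoint and pluricanonical systems on Gorenstein surfaces of \cite{CFHR}: for a connected Gorenstein surface $S$ with ample $\omega_S$, the system $|mK_S|$ has no fixed component for $m\ge 2$. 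The resulting contradiction shows $\dim\Bs|mL|=\dim\Bs|mH|\le 1$. Finally, a general $Y\in|3H|$ is smooth away from $\Bs|3H|$ by Bertini's theorem, so $\dim\Sing Y\le\dim\Bs|3H|\le1$.

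The crux is this last step. The multiplicity estimate of Proposition~\ref{prop:key} only forces $P$ to occur with multiplicity $1$ in a general member of $|mL|$, which is precisely the borderline case in which the log canonical / effective non-vanishing method of Proposition~\ref{prop:key} degenerates: the ampleness margin $mL-(K_V+D)$ drops to $0$ along such a $P$, so that circle of ideas cannot conclude. What replaces it is genuine positivity on $S$, and here the difficulty is that $S$ may well be reducible and non-normal (the system $|L|$ may carry a fixed divisor and further base points), so classical surface-of-general-type theory does not apply. The $2$-connectedness machinery and the freeness of pluricanonical systems established in \cite{CFHR} for such generalized Gorenstein surfaces are exactly what is needed to exclude a fixed component, and I expect verifying the precise hypotheses of \cite{CFHR} for $S$ to be the main technical obstacle.
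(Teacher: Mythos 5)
Your reduction is carried out correctly up to its decisive step: the two Kawamata--Viehweg surjectivity arguments are fine, the adjunction $\omega_S = L|_S$ is right, and the conclusion that $P$ would be a fixed component of $|mK_S|$ on the Gorenstein surface $S$ is legitimate (modulo reducedness of $S$, see below). But the statement you then invoke --- that for a connected Gorenstein surface with ample dualizing sheaf the system $|mK_S|$ has no fixed component for $m \geq 2$ --- is not a theorem of \cite{CFHR}, and you give no proof of it. The freeness results of \cite{CFHR} carry numerical connectedness hypotheses; this is visible in the paper itself, which, when it does use \cite[Theorem 3.3]{CFHR} (in the case $(n,m)=(4,3)$ of Theorem \ref{thm:main1} $(3)$), must first verify numerical $2$-connectedness ($A \cdot B \geq 2$) and run a separate ad hoc argument when that fails. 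In your setting this connectedness is exactly what is in doubt: $S$ is a general member of a system $|L|$ whose fixed part contains $P$, so $S$ decomposes as $P$ plus other components, may be non-normal, and --- a point you miss --- may a priori even be non-reduced, since nothing you prove rules out a non-reduced fixed part of $|L|$; moreover $P$ could be attached to the rest of $S$ along a small curve, which is precisely the mechanism producing fixed components of pluricanonical systems on degenerate surfaces. To close this route you would need to show $S$ is reduced and numerically $2$-connected (or slc/stable) and then prove a fixed-component-freeness statement for $|mK_S|$; none of this is in the reference you cite, so the proof has a genuine gap at its crux --- which you yourself identify as ``the main technical obstacle'' but do not resolve.

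The reason you give for taking this detour is also mistaken, and this is worth spelling out because the paper's actual proof is exactly the lc-center method you declare to be degenerate. The trick is to build the boundary not from $|mH|$ scaled to the lc threshold (where, as you say, the ampleness margin can vanish) but from two general members $D_1, D_2 \in |H|$ with full coefficient $1$: Proposition \ref{prop:key} with $m=1$ and $n=4$ gives $a_{1,i} \geq 2r_{1,i}-1$, i.e.\ $(X, D_1+D_2)$ is lc, and any irreducible surface $S \subseteq \Bs|mH| \subseteq \Bs|H|$ is then an lc center of this pair, since the divisor obtained by blowing up $S$ has discrepancy $1 - \mult_S(D_1+D_2) \leq -1$, hence $=-1$. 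The ampleness margin is now $mH - (K_X+D_1+D_2) \sim (m-1)H$, which is ample precisely because $m \geq 2$. Then \cite[Theorem 1.2]{FG} (subadjunction on a minimal lc center $C \subseteq S$), \cite[Theorem 3.1]{K} (effective nonvanishing, applicable since $\dim C \leq 2$), and \cite[Theorem 2.2]{F} (surjectivity of restriction from $X$ to the lc center) produce a nonzero section of $mH|_C$ lifting to $X$, contradicting $C \subseteq \Bs|mH|$. Note finally that your very first move, restricting to $V$, already loses the positivity that makes this work: on $V$ one has $K_V \sim 0$ and the margin drops to $(m-2)L$, which fails to be ample at $m=2$, whereas the paper computes on $X$, where $K_X = -H$ contributes an extra $H$ of ampleness.
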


\begin{proof}
Suppose that $\dim \Bs |mH| \geq 2$ for some integer $m \geq 2$. By Lemma \ref{lem:dimbs}, we have $\dim \Bs |mH|=2$, so there is an irreducible surface $S \subseteq \Bs|mH| \subseteq \Bs|H|$. Now, take two general elements $D_1, D_2 \in |H|$. By Proposition \ref{prop:key}, $(X, D_1+D_2)$ is an lc pair, and $S$ is an lc center of $(X, D_1+D_2)$. There is a minimal lc center $C$ of $(X, D_1+D_2)$ contained in $S$. By \cite[Theorem 1.2]{FG}, there is an effective divisor $\Gamma$ on $C$ such that 
$$
(K_X+D_1+D_2)|_C \sim_{\Q} K_C + \Gamma
$$ 
and $(C, \Gamma)$ is a klt pair. By \cite[Theorem 3.1]{K}, $H^0(C, mH|_C) \neq 0$ since $mH - (K_X + D_1+D_2) \sim (m-1)H$ is ample. Now, by \cite[Theorem 2.2]{F}, the restriction map
$$
H^0(X, mH) \longrightarrow H^0(S, mH|_S)
$$
is surjective. However, $S \subseteq \Bs|mH|$, so this restriction map is the zero map. We get a contradiction. Therefore, $\dim\Bs |mH| \leq 1$ for any integer $m \geq 2$. Now, since $\Sing Y \subseteq \Bs|3H|$, it follows that $\dim \Sing Y \leq 1$.
\end{proof}

\begin{proof}[Proof of Theorem \ref{thm:main1} (3) for the case $(n,m)=(4,3)$]
We want to prove that a general element $Y \in |3H|$ has terminal singularities. Note that $H=-K_X$ and $\dim\Bs|3H| \leq 1$ by Lemma \ref{lem:dimbs|mH|}. We know that $h^0(X, H) \geq 2$.

First, assume that $H^4 \geq 2$. Take a general element $Z \in |H|$, which is a Gorenstein Calabi--Yau threefold with terminal singularities. 
Suppose that $\dim \Bs|3H|=1$. Then $Z$ is nonsingular at a general point $x$ in $\Bs|3H|$. By \cite[Theorem 3.1]{K1}, $|3H|_Z|$ is base point free at $x$. But $x \in \Bs|3H|=\Bs|3H|_Z|$, so we get a contradiction. Thus $\dim \Bs|3H| \leq 0$. 
Suppose that $Y$ has non-terminal singularity at $x$.
By Lemma \ref{lem:mult=2}, $\mult_x Y \geq 3$. Now, by Proposition \ref{prop:key}, $(X, Z+Y)$ is an lc pair. Thus $\mult_x Z = 1$, so $Z$ is nonsingular at $x$. By \cite[Theorem 3.1]{K1}, $3H|_Z$ is base point free at $x$, so we get a contradiction as before. Hence $Y$ has at worst terminal singularities.

Next, assume that $H^4 = 1$. The sectional genus of the polarized pair $(X, H)$ is $g(X, H)=\frac{(K_X + 3 H).H^3}{2}+1 = 2$.
By Fujita's classification \cite[Proposition C]{Ft}, we have $2 \leq h^0(X, H) \leq 4$, and the following hold:\\[3pt]
\begin{tiny}$\bullet$\end{tiny} $h^0(X, H)=4$ $\Leftrightarrow$ $X=X_{10} \subseteq \P(1,1,1,1,2,5)$ is a hypersurface of degree 10.\\
\begin{tiny}$\bullet$\end{tiny} $h^0(X, H)=3$ $\Leftrightarrow$ $X=X_{6,6} \subseteq \P(1,1,1,2,2,3,3)$ is a complete intersection of type (6,6).\\[3pt]
If $h^0(X, H)=4$, then $\Bs|H|=\Bs|3H|=\{ x\}$ and $|2H|$ is base point free. In this case, $\mult_x Y = 1$, so $Y$ is smooth. If $h^0(X, H)=3$, then $|3H|$ is base point free so that $Y$ is smooth. We now suppose that $h^0(X, H) = 2$.\footnote{\begin{scriptsize} It is unknown whether there is a smooth Fano fourfold $X$ with $h^0(X, -K_X)=2$ (cf. \cite[Question 5]{L2}).\end{scriptsize}} By Riemann--Roch formula, we have
$$
h^0(X, mH) = \frac{m^2(m+1)^2}{24} H^4+ \frac{m(m+1)}{24} H^2. c_2(X) + 1.
$$
Then $H^2. c_2(X) = 10$, and $h^0(X, 2H)=5,~h^0(X, 3H)=12$.
Let $Z_1, Z_2 \in |H|$ and $W \in |2H|$ be general members. Then $S:=Z_1 \cap Z_2$ is an irreducible Gorenstein surface with $K_S=H|_S$, and $C:=Z_1 \cap Z_2 \cap W$ is a Gorenstein curve with $K_C=3H|_C$. We have $H^i(X, \ell H)=0$ for $1 \leq i \leq 3$ and $\ell \in \Z$, so we get
$$
\begin{array}{l}
h^0(Z_1, H|_{Z_1})=1,~h^0(Z_1, 2H|_{Z_1})=3,~h^0(Z_1, 3H|_{Z_1})=7\\
h^0(S, H|_S)=0,~h^0(S, 2H|_S)=2,~h^0(S, 3H|_S)=4\\
h^0(C, H|_C)=0,~h^0(C, 2H|_C)=1,~h^0(C, 3H|_C)=4.
\end{array}
$$
Thus $p_a(C)=h^0(C, 3H|_C)=4$. As $C.H|_S=2$, we see that $C$ has at most two irreducible components. If $C$ is non-reduced, then $C=2H'$ for some $H' \in |H|_S|$. However, since $h^0(S, H|_S)= 0$, it follows that $C$ is reduced.

Suppose that there is an irreducible curve $A$ on $X$ with $A \subseteq \Bs|2H| \cap \Bs|3H|$. Since $h^0(S, C)=h^0(S, 2H|_S)= 2$, it follows that $C$ has two irreducible components. We write $C=A+B$ on $S$. Since the restriction map $H^0(X, 3H) \to H^0(C, K_C)$ is surjective, we have $A \subseteq \Bs|K_C|$.
Note that
$$
\deg_A(K_C)=\deg_B(K_C)=3
$$
since 
$$
\deg_A(H|_C)=A.H=1~~\text{ and }~~ \deg_B(H|_C)=B.H=1.
$$
By \cite[Definition 2.1 and Formula (3)]{FT}, we have
$$
4=p_a(C)=p_a(A)+p_a(B)+A \cdot B - 1,
$$
where 
$$
A \cdot B := \deg_A(K_C) - 2p_a(A)+2=\deg_B(K_C)-2p_a(B)+2.
$$
If $A \cdot B \geq 2$, then $C$ is numerically 2-connected in the sense of \cite[Definition 3.1]{CFHR}. In this case, by \cite[Theorem 3.3]{CFHR}, $|K_C|$ is base point free, so we get  a contradiction to that $A \subseteq \Bs|K_C|$. Thus $A \cdot B=1$, and then, $p_a(A)=p_a(B)=2$. Consider an exact sequence
$$
0 \longrightarrow \omega_B \longrightarrow \omega_C \longrightarrow \omega_C|_A \longrightarrow 0,
$$
which induces the following exact sequence
$$
0 \longrightarrow H^0(B, K_B) \longrightarrow H^0(C, K_C) \longrightarrow H^0(A, K_C|_A) 
$$
Then $h^0(A, K_C|_A) \geq 2$, which is a contradiction to that $A \subseteq \Bs|K_C|$. Thus we obtain $\dim \Bs|2H| \cap \Bs|3H| \leq 0$.

Recall that $Z_1$ is a Gorenstein Calabi--Yau threefold with terminal singularities. Then $\dim \Sing Z_1 \leq 0$. If $Y$ is singular along a curve $D$, then $D \subseteq \Bs|3H|$ and $D \not\subseteq \Bs|2H|$. For a general point $x \in D$, we have $\mult_x |H| =1$ and $\mult_x |2H|=0$, so $\mult_x Y = 1$ by the upper semicontinuity of the multiplicity. We get a contradiction because $Y$ is singular at $x$. This means that  $Y$ cannot be singular along a curve. By Bertini's theorem, we see that $\mult_x Y \leq 2$ for all $x \not\in \Bs|2H| \cup \Sing Z_1$.
Now, suppose that $Y$ has an isolated non-terminal singular point $x$. By Lemma \ref{lem:mult=2}, $\mult_x Y \geq 3$, which implies that $x \in \Bs|2H| \cup \Sing Z_1$. Note that every general element $Y' \in |3H|$ has $\mult_x Y' \geq 3$.
By Proposition \ref{prop:key}, $(X, Y+Z_1)$ is an lc pair, so $\mult_x Z_1 = 1$. If $\mult_x W  = 1$, then the upper semicontinuity of the multiplicity shows that $\mult_x Y' \leq 2$, which is a contradiction. Thus $\mult_x W \geq 2$. Now, $\dim \Bs|2H| \cap \Bs|3H| \leq 0$ implies that $C \cap Y'$ has dimension zero. Notice that $C \cap Y' = Z_1 \cap Z_2 \cap W \cap Y'$ has length 6, and recall that $\mult_x W \geq 2$ and $\mult_x Y' \geq 3$. Hence $C \cap Y'$ is indeed supported at a single point $x$.
Since $H^0(X, 3H) \to H^0(C, 3H|_C)$ is surjective, every element in $|3H|_C|$ has a single support $x$. But this is impossible since $h^0(C, 3H|_C) \geq 2$. We can conclude that  $Y$ has at worst terminal singularities.
\end{proof}

\begin{remark}\label{rem:2H}
Let $X$ be a Fano fourfold of coindex $4$ with fundamental divisor $H=-K_X$.
Suppose that $H^4 \geq 4$,~ $H^2.S \geq 3$ for every irreducible surface $S$, and $H^3.C \geq 2$ for every irreducible curve $C$. Take a general element $Z \in |H|$. By \cite[Theorem 3.1]{K}, $|2H|_Z|$ is base point free at every nonsingular point in $Z$. This implies that $\dim \Bs|2H| \leq 0$. In this case, we can easily show that a general member in $|2H|$ has terminal singularities.
\end{remark}

%%%%%%%%%%%%%%%%%%%%%%%%%%%%%%%%%%%%%%%%%%%%%%%%%%%%%%%%%%%%%%%%%%%%%%%%%%%%%%%%%%%%%%%%%%%%%%%%%%%%%%%%

\end{document}